\documentclass[12pt]{amsart}
\usepackage{amsmath,amssymb,amsthm,amscd}

\def\ri{\right}

\def\p{\partial}

\newcommand\C{{\mathbb C}}

\def\R{R_\abb}

\def\be{\begin{equation}}
\def\ee{\end{equation}}

\def\ri{\right}

\def\p{\partial}

\def\C{\Bbb C}

\def\p{\partial}

\def\p{\partial}

\def\C{\Bbb C}
\def\R{\Bbb R}

\newtheorem{thm}{Theorem}[section]

\newtheorem{lem}{Lemma}[section]

\theoremstyle{definition}
\newtheorem{defn}{Definition}[section]
\theoremstyle{remark}

\newtheorem{rem}{Remark}[section]


\setlength{\topmargin}{-0.in} \setlength{\headheight}{2ex}
\setlength{\headsep}{4ex}

\begin{document}

\title[Lagrangian self-similar solitons]
{Entire self-similar solutions to Lagrangian Mean curvature flow}

\author{Albert CHAU}
\address{Department of Mathematics\\
University of British Columbia\\
Vancouver, B.C., V6T 1Z2\\
Canada}
\email{chau@math.ubc.ca}

\author{Jingyi CHEN}
\email{jychen@math.ubc.ca}

\author{Weiyong He}
\email{whe@math.ubc.ca}
\thanks{2000 Mathematics Subject Classification.  Primary 53C44, 53A10.}
\thanks{The first two authors are partially supported by NSERC, and the third author is
partially supported by a PIMS postdoctoral fellowship.}
\date{May 19, 2009}

\begin{abstract}
We consider self-similar solutions to mean curvature evolution of entire Lagrangian graphs.  When the Hessian of the potential function $u$ has eigenvalues strictly uniformly between $-1$ and $1$,  we show that on the potential level all the shrinking solitons are quadratic polynomials while the expanding solitons are in one-to-one correspondence to functions of homogenous of degree 2 with the Hessian bound. We also show that if the initial potential function is cone-like at infinity then the scaled flow converges to an expanding soliton as time goes to infinity.  
\end{abstract}

\maketitle

\section{introduction}
Let $u: \R^n\rightarrow \R$ be a smooth function. Then $L_u=\{(x, Du(x)):x\in\R^n\}$ defines a graphical Lagrangian submanifold in $\R^{2n}=\R^{n}\oplus \sqrt{-1}\,\R^n=\C^n.$
On the other hand, any entire graphical Lagrangian submanifold in $\R^{2n}$ can be obtained by such a potential $u$, up to addition of  constants. Let 
\[
 G(D^2u)=\dfrac{1}{\sqrt{-1}} \log \dfrac{\det (I_n+ \sqrt{-1}D^2u)}{\sqrt{\det (I_n+ (D^2u)^2)}},
 \] 
 where $I_n$ is the $n$ dimensional identity matrix.  The operator $G(D^2u)$ is strictly elliptic and arises in the equation for minimal Lagrangian graphs.  Namely, $L_u$ is a minimal submanifold in 
 $\R^{2n}$ exactly when $G(D^2u)$ is equal to some constant $\Theta$.  In this note, we will consider the following two equations for $u(x)$:

\begin{equation}\label{expander-elliptic}
G(D^2u)-u+\frac{1}{2}\nabla u\cdot x=0,
\end{equation}

\begin{equation}\label{shrinker-elliptic}
G(D^2u)+u-\frac{1}{2}\nabla u\cdot x=0.
\end{equation}

Our motivation is that \eqref{expander-elliptic} and \eqref{shrinker-elliptic} are the defining equations for self-expanding and self-shrinking solutions to the Lagrangian mean curvature flow of entire graphs in $\R^{2n}$.   We describe this in more detail in the next section.  We will refer to an entire solution to \eqref{expander-elliptic} as a \begin{it}self-expanding soliton\end{it} and an entire solution to \eqref{shrinker-elliptic} as a \begin{it}self-shrinking soliton. \end{it}

\begin{defn} We say that $u_0$ satisfies  {\it Condition A} if $D^2u_0\in L^\infty(\R^n)$ and
$$-(1-\delta) I_n\leq  D^2u_0 \leq (1-\delta) I_n$$
for some $\delta \in (0, 1)$ where $I_n$ is the $n$ dimensional identity matrix.  
We say $u_0$ satisfies {\it Condition B} if  $D^2u_0\in L^\infty(\R^n)$ and
$$u_0(x)=\dfrac{1}{\lambda^2} u_0 (\lambda x)$$ for any $\lambda>0.$
\end{defn}

We now state our main result
\begin{thm}\label{soliton}  
If $u$ is a smooth self-shrinking soliton satisfying Condition A,  then $u$ is a quadratic. There exists a one-to-one correspondence between smooth self-expanding solitons satisfying  Condition A and functions which satisfy Condition A and Condition B. In particular, 
there are infinitely many nontrivial smooth self-expanding solitons satisfying  Condition A.  
\end{thm}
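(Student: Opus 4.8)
The plan is to pass to the parabolic picture and to exploit a rotation of the ambient $\C^n$. Write $G(D^2u)=\sum_i\arctan\lambda_i(D^2u)$, where $\lambda_i$ are the eigenvalues of $D^2u$. A self-shrinking soliton $u$ produces the ancient solution $U(x,t)=-t\,u(x/\sqrt{-t})$ ($t<0$) and a self-expanding soliton $u$ produces the immortal solution $U(x,t)=t\,u(x/\sqrt t)$ ($t>0$), both solving the Lagrangian mean curvature potential equation $U_t=G(D^2U)$; under Condition A this is uniformly parabolic and $|U_t|=|G(D^2U)|\le n\arctan(1-\d)<n\pi/2$ is bounded, with $D^2U(\cdot,t)$ satisfying Condition A for each $t$. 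I will also repeatedly use the \emph{rotation trick}: rotating the graph $L_u$ by $e^{\sqrt{-1}\,\pi/4}$ in each $\C$–factor replaces $u$, on the level of the graph, by an entire potential $w$ with $c_1I_n\le D^2w\le c_2I_n$ for some $0<c_1<c_2$ depending only on $\d$ — the new Hessian eigenvalues are $(1+\lambda_i)/(1-\lambda_i)>0$, and the map $x\mapsto\tfrac1{\sqrt2}(x-\nabla u)$ is a global diffeomorphism of $\R^n$ since it is the gradient of a uniformly convex function — while preserving the mean curvature flow and the self-similar structure, since isometries of $\C^n$ commute with MCF and carry shrinkers to shrinkers and expanders to expanders. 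On the positive cone $G$ is strictly concave, so the rotation buys concavity.

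\emph{Shrinking case.} After the rotation I may assume $c_1I_n\le D^2w\le c_2I_n$ and $G(D^2w)+w-\tfrac12 x\cdot\nabla w=0$ (the constant shift $n\pi/4$ in the Lagrangian angle is absorbed by subtracting a constant from $w$). Differentiating the equation twice in $x_p$ and summing, $h:=\Delta w$ satisfies $\mathcal L_G h:=G^{ij}\partial_{ij}h-\tfrac12 x\cdot\nabla h=-\sum_p G^{ij,kl}(\partial_pD^2w)_{ij}(\partial_pD^2w)_{kl}\ge 0$ by concavity of $G$, and $nc_1\le h\le nc_2$ is bounded. The operator $\mathcal L_G$ is uniformly elliptic with the inward Ornstein–Uhlenbeck drift $-\tfrac12 x\cdot\nabla$, hence positively recurrent with a Gaussian–type invariant measure, so a bounded $\mathcal L_G$–subsolution on $\R^n$ is constant. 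Thus $\Delta w$ is constant, which forces $\sum_p G^{ij,kl}(\partial_pD^2w)_{ij}(\partial_pD^2w)_{kl}=0$; since the Hessian of $G$ is negative definite on the positive cone, $D^2w$ is constant, so $w$, hence $u$, is quadratic (planes rotate to planes).

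\emph{Expanding case.} I set up the bijection $u\leftrightarrow v$ via the blow-down potential. Given a smooth expander $u$ with Condition A, the bound $|\partial_tU|=|G(D^2U)|\le n\arctan(1-\d)$ shows $v(x):=\lim_{t\to0^+}t\,u(x/\sqrt t)$ exists; the scaling $U(\lambda x,\lambda^2t)=\lambda^2U(x,t)$ gives Condition B for $v$, and passing to the limit in the uniform $C^{1,1}$ bound on $U(\cdot,t)$ gives Condition A for $v$. Conversely, given $v$ with Conditions A and B, I solve the Cauchy problem $U_t=G(D^2U)$, $t>0$, with data $v$: short–time existence holds since the equation is uniformly parabolic and $v\in C^{1,1}$; Condition A is preserved along the flow — most cleanly seen after the $\pi/4$ rotation, where the two–sided positive Hessian bound is preserved by the maximum principle using concavity on the positive cone — and the cone tip at the origin is handled by interior estimates away from $0$ together with a barrier. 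Uniqueness of the flow applied to $\lambda^{-2}U(\lambda x,\lambda^2t)$ forces $U$ to be self–similar, so $U(x,t)=t\,u(x/\sqrt t)$ with $u:=U(\cdot,1)$ a smooth expander of Condition A and blow-down $v$; these two maps are mutually inverse. Uniqueness of the expander with a given blow-down follows from the maximum principle applied to the difference $u_1-u_2$ of two expanders: it solves a uniformly elliptic equation with zeroth–order coefficient $-1<0$ and it is $o(|x|^2)$ at infinity because both expanders have blow-down $v$, so comparison with $\e(|x|^2+C)$ and letting $\e\to0$ gives $u_1\equiv u_2$. Finally, there are infinitely many non-quadratic functions satisfying Conditions A and B — for instance homogeneous degree-two functions that are merely $C^{1,1}$ across a hyperplane, rescaled to meet the Hessian bound — so there are infinitely many nontrivial expanders.

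The main obstacle is the a priori analysis of the flow: preservation of Condition A along the Lagrangian mean curvature flow together with the regularity theory at the cone tip in the expanding case, and the Liouville property for the drift operator $\mathcal L_G$ on all of $\R^n$ in the shrinking case. The rotation trick is exactly what reduces both to the favorable regime of uniformly positive Hessian, where the concavity and strict concavity of $\sum_i\arctan\lambda_i$ become available.
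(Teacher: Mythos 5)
Your overall architecture --- pass to the parabolic potential equation $u_t=G(D^2u)$, identify expanders with their degree-two-homogeneous blow-downs, and use uniqueness of the flow against the rescalings $\lambda^{-2}U(\lambda x,\lambda^2 t)$ to force self-similarity --- parallels the paper's, but the two halves fare differently. For the shrinking case you take a genuinely different route: the paper runs the shrinker \emph{forward}, noting that $u(x,t)=(1-t)v(x/\sqrt{1-t})$ solves the flow on $(0,1)$ with initial data $v$, so the interior estimate $\sup_x|D^3u(x,t)|\le C$ for $t\ge 1/2$ from \cite{CCH} collides with the self-similar identity $D^3u(x,t)=(1-t)^{-1/2}D^3v(x/\sqrt{1-t})$ to give $|D^3v|\le C\sqrt{1-t}\to 0$. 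Your alternative (rotate by $e^{\sqrt{-1}\pi/4}$ into the uniformly convex regime, differentiate the soliton equation twice, and invoke a Liouville theorem for bounded subsolutions of $G^{ij}\partial_{ij}-\tfrac12 x\cdot\nabla$) is sound in outline and avoids the parabolic machinery entirely, but the Liouville step is itself a nontrivial lemma (positive recurrence of the drift diffusion via the Lyapunov function $|x|^2$, or an analytic substitute) that you assert rather than prove, and you should verify that the rotated potential can be normalized, up to an affine function, to satisfy the shrinker equation exactly.

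For the expanding case there is a genuine gap. The direction expander $\Rightarrow$ blow-down is essentially the paper's argument, and your elliptic uniqueness of the expander with prescribed blow-down (comparing $u_1-u_2=o(|x|^2)$ with $\epsilon(|x|^2+C)$ using the $-1$ zeroth-order coefficient) is a correct and self-contained substitute for one use of parabolic uniqueness. But the converse direction requires (a) long-time existence for the Cauchy problem on all of $\mathbb{R}^n$ with merely $C^{1,1}$, quadratically growing data satisfying Condition A, (b) preservation of Condition A for all time, and (c) uniqueness of such solutions, without which $\lambda^{-2}U(\lambda x,\lambda^2 t)=U(x,t)$ and hence the self-similarity of $U$ cannot be concluded. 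These are precisely the main theorems of \cite{CCH} and \cite{chen-pang}, on which the paper's proof explicitly rests; they do not follow from ``short-time existence since the equation is uniformly parabolic'' together with an unspecified barrier at the cone tip, since both the existence theory for fully nonlinear equations with non-smooth data on a noncompact domain and the maximum principle for quadratically growing solutions on $\mathbb{R}^n$ require substantial justification. Either cite those results or supply them; as written, the surjectivity of your correspondence is not established.
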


We also obtain a convergence result for the following fully nonlinear parabolic equation which arises from the Lagrangian mean curvature flow (see \S2):
\begin{equation}\label{PMA}
\left\{%
\begin{array}{ll}
 &\dfrac{du}{dt} = G(D^2u)\bigskip\\
 &u(x, 0)=u_0(x).
\end{array}%
\right.
 \end{equation}

\begin{thm}\label{T-2}
Let $u_0$ satisfy Condition A and suppose that  
\begin{equation}\label{hypothesis}
\lim_{\lambda\rightarrow \infty}\lambda^{-2}u_0(\lambda x)\rightarrow U_0(x)
\end{equation}
 for some $U_0(x)$ as $\lambda\rightarrow \infty$. Let $u(x, t)$ and $U(x, t)$ be solutions to \eqref{PMA} with initial data $u_0(x)$ and $U_0(x)$ respectively.
Then $t^{-1}u(\sqrt{t}x, t)$ converges to $U(x, 1)$ uniformly and smoothly in compact subsets of $\R^n$ when $t\rightarrow \infty$, where $U(x, 1)$ is a self-expanding soltion.
\end{thm}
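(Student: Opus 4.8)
\medskip
\noindent\textbf{Sketch of proof of Theorem \ref{T-2}.}
The plan is to exploit the parabolic scaling symmetry of \eqref{PMA}: if $u(x,t)$ solves \eqref{PMA}, then for every $\lambda>0$ so does $v_\lambda(x,s):=\lambda^{-2}u(\lambda x,\lambda^2 s)$, since $D^2_x v_\lambda(x,s)=(D^2u)(\lambda x,\lambda^2 s)$ leaves $G$ unchanged while $\partial_s v_\lambda(x,s)=(\partial_t u)(\lambda x,\lambda^2 s)$; moreover $v_\lambda(x,0)=\lambda^{-2}u_0(\lambda x)$. Taking $\lambda=\sqrt t$ and $s=1$ gives the identity $t^{-1}u(\sqrt t\,x,t)=v_{\sqrt t}(x,1)$, so it suffices to prove that $v_\lambda\to U$ in $C^\infty_{\mathrm{loc}}(\R^n\times(0,\infty))$ as $\lambda\to\infty$, where $U$ solves \eqref{PMA} with data $U_0$, and then that $U(\cdot,1)$ satisfies \eqref{expander-elliptic}.

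I would obtain the convergence by a compactness argument. Normalizing $u_0(0)=0$ and $\nabla u_0(0)=0$, Condition A gives the uniform bounds $|v_\lambda(x,0)|\le\tfrac12(1-\delta)|x|^2$ and $-(1-\delta)I_n\le D^2 v_\lambda(\cdot,0)\le(1-\delta)I_n$; by \eqref{hypothesis}, upgraded from pointwise to locally uniform convergence by Arzel\`a--Ascoli via the uniform Hessian bound, the data $v_\lambda(\cdot,0)$ converge locally uniformly to $U_0$, which still satisfies Condition A and, since $U_0(\mu x)=\lim_{\lambda\to\infty}\lambda^{-2}u_0(\lambda\mu x)=\mu^2 U_0(x)$ for every $\mu>0$, also Condition B. Now I invoke the existence and a priori estimates for \eqref{PMA} from \S2: Condition A is preserved along the flow, so each $v_\lambda$ stays uniformly elliptic for all $s>0$; writing $G(D^2 v_\lambda)=\sum_i\arctan\mu_i$ for the eigenvalues $\mu_i$ of $D^2 v_\lambda$, one has $|\partial_s v_\lambda|\le n\pi/4$, hence $|v_\lambda(x,s)-v_\lambda(x,0)|\le\tfrac{n\pi}{4}s$ uniformly in $\lambda$; and the interior estimates of \S2 give uniform $C^\infty_{\mathrm{loc}}(\R^n\times(0,\infty))$ bounds for $\{v_\lambda\}$.

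These bounds let every sequence $\lambda_k\to\infty$ have a subsequence along which $v_{\lambda_k}\to v_\infty$ in $C^\infty_{\mathrm{loc}}(\R^n\times(0,\infty))$ with $v_\infty$ solving \eqref{PMA}; combining this with the uniform estimate $|v_{\lambda_k}(x,s)-v_{\lambda_k}(x,0)|\le\tfrac{n\pi}{4}s$ and the local uniform convergence of the data shows $v_\infty(\cdot,s)\to U_0$ locally uniformly as $s\to 0^+$, so $v_\infty$ is a solution of \eqref{PMA} with data $U_0$. By uniqueness of such solutions within the class satisfying Condition A, $v_\infty=U$; as the limit is independent of the subsequence, the whole family converges, $v_\lambda\to U$ in $C^\infty_{\mathrm{loc}}(\R^n\times(0,\infty))$, and restricting to $s=1$, $\lambda=\sqrt t$ yields the asserted convergence on compact subsets of $\R^n$.

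Finally, to see that $w:=U(\cdot,1)$ is a self-expanding soliton, note that since $U_0$ is homogeneous of degree $2$ we have $\lambda^{-2}U_0(\lambda x)=U_0(x)$, so $\lambda^{-2}U(\lambda x,\lambda^2 s)$ and $U(x,s)$ solve \eqref{PMA} with the same data; by uniqueness $U(x,s)=s\,w(x/\sqrt s)$ for $s>0$. Then $D^2_x U(x,s)=(D^2w)(x/\sqrt s)$ and $\partial_s U(x,s)=w(x/\sqrt s)-\tfrac12 s^{-1/2}\,\nabla w(x/\sqrt s)\cdot x$, so $\partial_s U=G(D^2U)$ at $s=1$ becomes $w-\tfrac12\nabla w\cdot x=G(D^2w)$, i.e.\ $G(D^2w)-w+\tfrac12\nabla w\cdot x=0$, which is exactly \eqref{expander-elliptic}; and $w$ satisfies Condition A by the preservation property, consistent with Theorem \ref{soliton}. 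The scaling and uniqueness steps above are routine; the main obstacle is the analytic input from \S2 --- preservation of the Hessian bound along \eqref{PMA} (hence uniform ellipticity), the uniform interior higher-order estimates for solutions with bounded Hessian, and uniqueness together with continuous attainment of the initial data in that class.
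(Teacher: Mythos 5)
Your proposal is correct and follows essentially the same route as the paper: rescale $u_\lambda(x,t)=\lambda^{-2}u(\lambda x,\lambda^2 t)$, use the a priori estimates from the companion existence theorem plus Arzel\`a--Ascoli to extract a smooth subsequential limit on $\R^n\times(0,\infty)$, identify its initial data as $U_0$ (which inherits Conditions A and B), and conclude by the uniqueness theorem that the full family converges and that $U(\cdot,1)$ solves \eqref{expander-elliptic} via the self-similar form forced by homogeneity. Your explicit uniform-in-$\lambda$ bound $|v_\lambda(x,s)-v_\lambda(x,0)|\le \tfrac{n\pi}{4}s$ for matching the initial data of the limit is a slightly more careful rendering of a step the paper treats briefly, but it is the same argument.
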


If we define $v(x, s)=t^{-1}u(\sqrt{t}x, t)$ where $s=\log t$ and $t\geq 1.$ Then $v(x, s)$ satisfies the following equation
\[
\frac{\p v}{\p s}=G(D^2 v)-v+\frac{1}{2}\nabla v\cdot x.
\]
Theorem \ref{T-2} implies that $v(x, s)$ converges to $U(x, 1)$ when $s\rightarrow \infty$. 
For mean curvature flow of entire graphic hypersurface, if the initial surface is cone-like at infinity, in the sense that the normal component of the graph decays to zero at infinity at a certain rate, then the scaled mean curvature flow converges to a self-expanding soliton \cite{EH}, where the monotonicity formula plays an important role. Our method is different  from \cite{EH} and is based on the existence result and the estimates in \cite{CCH} and the uniqueness result in \cite{chen-pang}. 

 Some interesting examples of non-graphical self-similar solutions to Lagrangian mean curvature flow in $\C^n$ have recently been constructed \cite{AH, JLT}; also see references therein.  
 

\section{Preliminaries}
We now describe the connections among the elliptic equations \eqref{expander-elliptic} and \eqref{shrinker-elliptic}, the parabolic equation \eqref{PMA} and the Lagrangian mean curvature flow. 

 Let $F_0: M^n\rightarrow \mathbb{R}^{n+m}$ be an embedding of a manifold $M^n$ into  $\mathbb{R}^{n+m}$.  Then the mean curvature flow with initial condition $F_0$ is the equation

\begin{equation}\label{LMCF}
\left\{
\begin{array}{ll}
 &\dfrac{dF}{dt} =  H \bigskip\\ 
 &F(x, 0)=F_0(x)
 \end{array}
 \right.
\end{equation}
where $H(x, t)$ is the mean curvature vector of the submanifold $F(\cdot, t)(M)$ of $\R^{2n}$
at $F(x, t)$.  The solution $F(\cdot, t): M^n\rightarrow \mathbb{R}^{n+m}$ is called {\it self-expanding} if it is defined for all $t>0$ and $F(\cdot, t)$ has the form
\begin{equation}\label{expander}
M_t=\sqrt{t} \,  M_1,\hspace{12pt}\hbox{ for all $ t>0$}
\end{equation}
where $M_t=F(\cdot, t)(M)$ and $\sqrt{t}$ represents the homothety $p\mapsto \sqrt{t} p$, $p\in \mathbb{R}^{n+m}$.  Similarly, $F(\cdot, t): M^n\rightarrow \mathbb{R}^{n+m}$  is called {\it self-shrinking}  if it is defined for all $t<0$ and $F(x, t)$ has the form
\begin{equation}\label{shrinker}
M_{t}=\sqrt{-t}\,M_{-1},\hspace{12pt}\hbox{for all $ t<0$}.
\end{equation}

Now when $F_0(x)=(x, Du_0(x))$ for $x\in \R^{n}$ defines an entire Lagrangian graph for some potential $u_0:\R^n \to \R$, then \eqref{LMCF} is equivalent to the following fully nonlinear parabolic equation \eqref{PMA}:

\begin{equation}\nonumber
\left\{%
\begin{array}{ll}
 &\dfrac{du}{dt} = G(D^2u)\bigskip\\
 &u(x, 0)=u_0(x).
\end{array}%
\right.
 \end{equation} 
 In particular, if $u$ is a solution to \eqref{PMA} then there exists a family of diffeomorphisms \cite{smoczyk} $r_t:\R^n \to \R^n$ such that $F(x, t)=(r_t(x), Du(r_t(x), t))$ solves \eqref{LMCF}.    
Now suppose the family $F(x, t)$ satisfies \eqref{expander}.  Then we have
\begin{equation}\label{u-expander}
D\left(u(x, t)-tu\left(\frac{x}{\sqrt{t}}, 1\ri)\right)=0, \,\,\forall  ~t>0.
\end{equation} Thus by letting $t=1$, we have
\begin{equation}\label{u-expander-1}
u(x, t)=tu\left(\frac{x}{\sqrt{t}}, 1\right), \,\, \forall ~t>0.
\end{equation} 
Using \eqref{PMA} and \eqref{u-expander-1} we verify directly that $u(x, 1)$ satisfies \eqref{expander-elliptic}, in other words, $u(x, 1)$ is a self-expanding soliton.  Similarly, we can derive that if $F(x, t)$ is a self-shrinking solution to \eqref{LMCF} with corresponding solution $u(x, t)$ to \eqref{PMA}, then $u(x, t)$ satisfies

\begin{equation}\label{u-shrinker-1}
u(x, t)=-t u\left(\frac{x}{\sqrt{-t}}, -1\right),  \forall ~t>0,
\end{equation}
and $u(x, -1)$ satisfies \eqref{shrinker-elliptic}.  So $u(x, 1)$ is a self-shrinking soliton.

Conversely, it is not hard to show that if $u(x)$ solves either \eqref{expander-elliptic}  or \eqref{shrinker-elliptic}, then using \eqref{u-expander-1} or \eqref{u-shrinker-1} respectively we can generate a solution $F(x, t)$ to \eqref{LMCF} which is either shrinking or expanding.  We illustrate this in detail in the expanding case.  The case for shrinking solutions is similar.  Suppose $u(x)$ solves \eqref{expander-elliptic}  and define $u(x, t):=tu(x/\sqrt{t})$.  Then as above, the family  $M_t:=\{(x, Du(x, t)):x\in \R^n\}$ is easily checked to satisfy \eqref{expander}.  On the other hand,  we also have
\begin{equation}
\begin{split}
\dfrac{du}{dt} (x, t)=& \,\,  u\left(\frac{x}{\sqrt{t}}\right) -\frac{1}{2} \nabla u({x}{\sqrt{t}})\cdot \frac{x}{\sqrt{t}}\\
=&\,\,G\left(D_y^2 u\left(\frac{x}{\sqrt{t}}\right)\right)\\
=&\,\,G\left(D^2 u(x, t)\right)
\end{split}
\end{equation}
where $y=x/\sqrt{t}$.  In other words, $u(x, t)$ solves \eqref{PMA} and by the discussion above there exists a family $r_t$ such that $F(x, t)=(r_t(x), Du(r_t(x), t))$ is solution to \eqref{LMCF} which is self-expanding.

\section{Proofs of Theorems}

The proof of Theorem \ref{soliton} is to consider the parabolic equation \eqref{PMA} with Lipschitz initial data. Let us recall the main theorem in \cite{CCH}:
\begin{thm}\label{entire}
Let $u_0:\R^n \to \R$ satisfy Condition A.  Then
\eqref{PMA} has a longtime smooth solution $u(x, t)$ for all $t>0$ with initial condition $u_0$ such that the following estimates hold:

(i)  $-(1-\delta) I_n\leq D^2u\leq (1-\delta) I_n$  for all $t>0$,

(ii) $\sup_{x\in \R^n}\left|D^l u(x,t)\right|^2 \leq C(l, \delta)/t^{l-2}$,  for  $l\geq3$ and some $C(l, \delta)$.

(iii) $u(x, t)\in C^{\infty}(\R^n \times (0, \infty))$ and $u(x, t)$ converges to $u_0(x)$ in the Lipschitz norm when $t\rightarrow 0$. 
\end{thm}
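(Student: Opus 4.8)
The plan is to obtain the solution as a limit, uniform in a mollification parameter, of short‑time solutions on all of $\R^n$, deriving every asserted bound in the process. Two structural facts about $G$ organize the argument. First, in the eigenbasis of $D^2u$ one has $G(D^2u)=\sum_i\arctan\lambda_i$, so the linearization $\eta^{ij}:=\partial G/\partial a_{ij}(D^2u)$ is diagonal with entries $1/(1+\lambda_i^2)$; on the region $-(1-\delta)I_n\le D^2u\le(1-\delta)I_n$ these lie in $[(1+(1-\delta)^2)^{-1},1]$, so there \eqref{PMA} is uniformly parabolic with constants depending only on $\delta$, and $|G(D^2u)|\le n\arctan(1-\delta)$. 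Second, the unitary rotation $z\mapsto e^{-\sqrt{-1}\pi/4}z$ of $\C^n$ commutes with mean curvature flow and, since $D^2u>-I_n$, carries the Lagrangian graph of $Du(\cdot,t)$ to a Lagrangian graph whose potential $v$ has $D^2v$ with eigenvalues $\tan(\arctan\lambda_i-\pi/4)$, uniformly negative under Condition A; since $G$ is convex on negative definite matrices, the rotated flow is a uniformly parabolic \emph{convex} fully nonlinear equation, to which parabolic Evans--Krylov applies.

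Now mollify $u_0$ to $u_0^\varepsilon\in C^\infty$, which preserves the two‑sided Hessian bound; replace $G$ by a globally uniformly elliptic extension $\bar G$ agreeing with $G$ where $|\lambda_i|\le1-\delta$, and solve $\partial_tu=\bar G(D^2u)$ on $\R^n$ short‑time in parabolic H\"older spaces (bounded geometry). The maximum principle confines $D^2u$ to the region where $\bar G=G$, so $u$ solves \eqref{PMA}; this preservation of the Hessian bound, which is~(i), is the crux and is discussed below. Granting it, comparison with the barriers $u_0^\varepsilon\pm n\arctan(1-\delta)\,t$ gives the $C^0$ bound $|u(x,t)-u_0^\varepsilon(x)|\le n\arctan(1-\delta)\,t$, and the fact that each $\partial_ku$ solves a homogeneous linear uniformly parabolic equation gives $\sup|Du(\cdot,t)|\le\mathrm{Lip}(u_0)$; moreover, by~(i) the rotated solution $v$ has uniformly negative definite, uniformly bounded Hessian, so Evans--Krylov yields $[D^2v]_{C^\alpha_{\mathrm{loc}}}\le C(\delta)$, hence $[D^2u]_{C^\alpha_{\mathrm{loc}}}\le C(\delta)$. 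For the higher derivatives, $w:=\partial_eu$ solves $\partial_tw=\eta^{ij}\partial_{ij}w$ with $\eta^{ij}\in C^\alpha_{\mathrm{loc}}$ uniformly; since $|Dw|=|D^2u\,e|\le1-\delta$ and $u$ is uniformly Lipschitz in $t$ (as $|G(D^2u)|$ is bounded), an interpolation inequality bounds the parabolic oscillation of $w$ on unit cylinders by $C(\delta)$ independently of $\mathrm{Lip}(u_0)$, so interior Schauder gives $\|D^3u\|_{L^\infty_{\mathrm{loc}}}\le C(\delta)$; iterating gives uniform scale‑one interior bounds on all $D^lu$. Since \eqref{PMA} and Condition A are invariant under $u\mapsto\mu^{-2}u(\mu x,\mu^2t)$, which carries $D^lu(x,t)$ to $\mu^{l-2}D^lu(\mu x,\mu^2t)$, applying those bounds to the rescaled solution gives $\sup_x|D^lu(x,t)|^2\le C(l,\delta)\,t^{-(l-2)}$ for $l\ge3$, which is~(ii). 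Finally Arzel\`a--Ascoli and a diagonal argument produce a solution of \eqref{PMA} on $\R^n\times(0,\infty)$, smooth and satisfying (i)--(ii); it is global since the Hessian bound never degenerates, and the $C^0$ bound together with the Hessian bound forces $u(\cdot,t)\to u_0$ in Lipschitz norm as $t\to0$, which is~(iii).

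The one genuinely delicate point is~(i). It does not follow from convexity or concavity of $G$ alone: in the rotated picture a convex operator yields only a one‑sided bound on the Hessian via the maximum principle, whereas Condition A is two‑sided. One instead differentiates the flow twice along a unit direction $e$ realizing $\max_{|e|=1}u_{ee}$; at an interior maximum $e$ is an eigenvector of $D^2u$ with eigenvalue $\lambda_{\max}=\max_k\lambda_k>0$, and the task is to check that the second‑order term of $G$ along $e$ --- the diagonal part $\sum_k(\arctan)''(\lambda_k)\,u_{kke}^2$ plus the off‑diagonal divided‑difference part $\sum_{k\ne l}\frac{(\arctan)'(\lambda_k)-(\arctan)'(\lambda_l)}{\lambda_k-\lambda_l}\,u_{kle}^2$ --- has, on $\{|\lambda_i|<1\}$ (where in particular $\lambda_k\ge-\lambda_{\max}$ once the lower bound is known), the sign making $\max_eu_{ee}$ a subsolution of a linear parabolic equation, and symmetrically $\min_eu_{ee}$ a supersolution. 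Geometrically this is the statement that the condition ``the tangent planes of the Lagrangian graph stay in a fixed neighborhood of $\R^n$ in the Lagrangian Grassmannian'' is preserved by Lagrangian mean curvature flow, in the spirit of the preserved convexity conditions of Smoczyk--Wang and Tsui--Wang; on the noncompact $\R^n$ the maximum principle is justified using the bounded geometry of the approximating solutions. Pinning down this sign computation --- exactly where the restriction that the eigenvalues lie strictly between $-1$ and $1$ is used --- together with the noncompact maximum‑principle argument, is where the real work lies.
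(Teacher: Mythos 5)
First, a point of order: the paper does not prove Theorem \ref{entire}. It is quoted verbatim as ``the main theorem in \cite{CCH}'' and used as a black box, so there is no in-paper proof to compare you against; the relevant comparison is with the argument of \cite{CCH} itself. With that caveat, your outline follows the same broad strategy as that reference: mollify the data, exploit uniform parabolicity on the constrained set of Hessians, use the unitary (Lewy--Yuan) rotation of $\C^n$ to land on a convex or concave uniformly parabolic operator so that Evans--Krylov and Schauder apply, and derive the decay (ii) from scale-one interior estimates together with the invariance of \eqref{PMA} under $u\mapsto\mu^{-2}u(\mu x,\mu^2 t)$. Those parts, including the interpolation device that makes the $D^3$ bound depend only on $\delta$ and the Lipschitz convergence in (iii), are sound in outline.

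The genuine gap is exactly the step you flag yourself: the preservation of the two-sided Hessian bound (i), on which everything else (uniform parabolicity, applicability of Evans--Krylov, the constants in (ii)) depends. Your proposed route --- checking the sign of $\sum_k(\arctan)''(\lambda_k)u_{kke}^2+\sum_{k\ne l}\frac{(\arctan)'(\lambda_k)-(\arctan)'(\lambda_l)}{\lambda_k-\lambda_l}u_{kle}^2$ directly for the unrotated operator --- does not close: $(\arctan)''(\lambda)=-2\lambda/(1+\lambda^2)^2$ changes sign on $(-1,1)$ and the divided differences of $(\arctan)'$ have no definite sign there, so neither $\max_e u_{ee}$ nor $\min_e u_{ee}$ satisfies a one-sided differential inequality in this gauge. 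The resolution is to use the rotation \emph{twice}, not once. Rotating by $+\pi/4$ sends the angles $\theta_i=\arctan\lambda_i\in(-\arctan(1-\delta),\arctan(1-\delta))\subset(-\pi/4,\pi/4)$ into a compact subinterval of $(-\pi/2,0)$, so the rotated potential has uniformly negative definite Hessian and the operator is convex there; convexity makes the second derivatives supersolutions of the linearized equation, so the \emph{lower} bound on the rotated Hessian --- equivalently $\lambda_i\ge-(1-\delta)$ --- is preserved. Rotating by $-\pi/4$ makes the rotated Hessian positive definite and the operator concave, which preserves the \emph{upper} bound $\lambda_i\le 1-\delta$. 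Your observation that a convex operator controls only one side is correct, but it is precisely the reason one needs the two opposite rotations; each supplies one inequality of Condition A. As written, your proposal leaves (i) as an acknowledged open computation, and the computation you point to is not the one that works.
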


We will apply Theorem \ref{entire} and the uniqueness result in \cite{chen-pang} for  the parabolic equation \eqref{PMA} to prove Theorem \ref{soliton}. The proof consists of the following two lemmas.

\begin{lem}\label{T-3}
Let $u_0:\R^n \to \R$  satisfy Condition A and  Condition B.
Then \eqref{PMA} has a  unique longtime smooth solution $u(x, t)$ for all $t>0$ with initial condition $u_0$ such that $u(x, t)=t u(x/\sqrt{t}, 1)$. 
In particular $u(x, 1)$ is a smooth self-expanding soliton satisfying \eqref{expander-elliptic}.  Conversely, if $v$ is smooth a self-expanding soliton satisfying Condition A, then there exists $u_0$ satisfying  Condition A and Condition B such that $u(x, t)$ is the unique solution of \eqref{PMA} with initial condition $u_0$ and $v=u(x, 1)$.
\end{lem}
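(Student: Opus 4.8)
The plan is to prove Lemma \ref{T-3} in two directions, using the existence theorem (Theorem \ref{entire}) and the uniqueness theorem of \cite{chen-pang} as black boxes, together with the scaling symmetry of \eqref{PMA}.

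For the forward direction, suppose $u_0$ satisfies Conditions A and B. By Theorem \ref{entire}, \eqref{PMA} has a longtime smooth solution $u(x,t)$ with the stated derivative bounds, and by \cite{chen-pang} this solution is unique in the class of solutions with $D^2u \in L^\infty$. The key observation is that the equation \eqref{PMA} is invariant under the parabolic scaling $u(x,t) \mapsto u_\lambda(x,t) := \lambda^{-2} u(\lambda x, \lambda^2 t)$, because $G$ depends only on $D^2u$ and $D^2 u_\lambda(x,t) = (D^2u)(\lambda x, \lambda^2 t)$, while $\partial_t u_\lambda = (\partial_t u)(\lambda x,\lambda^2 t)$; so $u_\lambda$ again solves \eqref{PMA}. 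Its initial data is $u_\lambda(x,0) = \lambda^{-2}u_0(\lambda x) = u_0(x)$ by Condition B, and $u_\lambda$ satisfies the same Hessian bound, hence lies in the uniqueness class. By uniqueness, $u_\lambda(x,t) = u(x,t)$ for all $\lambda > 0$, i.e. $u(x,t) = \lambda^{-2} u(\lambda x, \lambda^2 t)$; taking $\lambda = 1/\sqrt t$ gives $u(x,t) = t\, u(x/\sqrt t, 1)$. Then the computation already displayed in \S2 (the chain of equalities deriving $du/dt = G(D^2u)$ from this self-similar form) shows, read backwards, that $u(x,1)$ satisfies \eqref{expander-elliptic}, so $u(x,1)$ is a self-expanding soliton; it inherits Condition A from part (i) of Theorem \ref{entire}.

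For the converse, let $v$ be a smooth self-expanding soliton satisfying Condition A, so $G(D^2v) - v + \frac12 \nabla v\cdot x = 0$. Following the prescription in \S2, define $u(x,t) := t\,v(x/\sqrt t)$ for $t>0$; the computation in \S2 shows directly that $u$ solves $du/dt = G(D^2u)$, and $D^2u(x,t) = (D^2v)(x/\sqrt t)$ still satisfies the Hessian bound, so $u$ is \emph{a} solution of \eqref{PMA}. The issue is to identify its initial data as some $u_0$ satisfying Conditions A and B and to invoke uniqueness. I would set $u_0(x) := \lim_{t\to 0^+} u(x,t) = \lim_{t\to 0^+} t\, v(x/\sqrt t)$; writing $\lambda = 1/\sqrt t \to \infty$, this is $\lim_{\lambda\to\infty} \lambda^{-2} v(\lambda x)$. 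One must check this limit exists, is finite, and defines a function with $D^2u_0 \in L^\infty$ satisfying Condition B. Existence of the limit should follow from the second-derivative bound on $v$: along any ray, $\lambda^{-2}v(\lambda x)$ has bounded second derivative in $\lambda$ (after normalizing so that $v(0)$, $\nabla v(0)$ are controlled), so it is, up to lower-order terms that scale away, an increasing or at least convergent family — more cleanly, $\lambda^{-2}v(\lambda x)$ is monotone in $\lambda$ modulo the linear part, or one uses that the convexity-type bound $-(1-\delta)I_n \le D^2 v \le (1-\delta)I_n$ forces $|v(x) - v(0) - \nabla v(0)\cdot x| \le (1-\delta)|x|^2/2$, giving uniform bounds that yield a convergent subsequence, with the limit automatically homogeneous of degree $2$ (Condition B) and satisfying the same Hessian bound (Condition A). Homogeneity of the limit is immediate: $U_0(\mu x) = \lim_\lambda \lambda^{-2} v(\lambda\mu x) = \mu^2 \lim_{\lambda'} (\lambda')^{-2} v(\lambda' x) = \mu^2 U_0(x)$. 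Then by the forward direction, the unique solution with initial data $u_0 := U_0$ is self-similar, $\tilde u(x,t) = t\,\tilde u(x/\sqrt t,1)$ with $\tilde u(x,1)$ an expander; and by uniqueness $\tilde u = u$, so $v = u(\cdot,1) = \tilde u(\cdot,1)$, closing the correspondence.

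The main obstacle is the convergence step in the converse: showing that $\lim_{\lambda\to\infty}\lambda^{-2}v(\lambda x)$ actually exists (not merely along subsequences) and is smooth enough that $u_0$ genuinely satisfies Condition A in the form $D^2 u_0 \in L^\infty$ with the strict bound — the strictness ($\delta > 0$) should pass to the limit since the bound $-(1-\delta)I_n \le D^2 v \le (1-\delta)I_n$ is closed, but one must be careful that the Hessians converge in a suitable weak-$*$ sense. An alternative that sidesteps the pointwise-limit question is to argue directly from the soliton equation: differentiate the homogeneity-$2$ relation that a soliton "wants" to satisfy, or observe that \eqref{expander-elliptic} plus Condition A already forces $v$ itself to be asymptotically homogeneous of degree $2$ by a Liouville-type argument on the blow-down. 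I expect the cleanest route is the subsequential-limit-plus-uniqueness argument: any subsequential limit $U_0$ satisfies Conditions A and B, generates a self-expander equal to $v$ at time $1$ by the forward direction and uniqueness, and since $v$ determines $U_0$ (as $U_0(x) = \lim t\,v(x/\sqrt t)$ is forced once one knows the limit exists along the full family by comparing two subsequential limits through the same flow), the limit is in fact unique.
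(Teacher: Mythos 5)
Your forward direction is exactly the paper's argument: parabolic scaling invariance of \eqref{PMA}, Condition B to pin down the initial data of $u_\lambda$, and the uniqueness theorem of \cite{chen-pang} to force $u=u_\lambda$ for all $\lambda>0$, hence the self-similar form and the soliton equation at $t=1$. No issues there.

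The converse contains a genuine gap, and you have correctly located it yourself: you never actually prove that $u_0(x):=\lim_{t\to 0^+}t\,v(x/\sqrt t)$ exists along the full family rather than merely along subsequences. Neither of your proposed repairs closes it. The substitution argument for homogeneity, $U_0(\mu x)=\lim_\lambda\lambda^{-2}v(\lambda\mu x)=\mu^2\lim_{\lambda'}(\lambda')^{-2}v(\lambda' x)$, silently reindexes the sequence and is only valid once the full limit is known to exist; and the ``subsequential-limit-plus-uniqueness'' route is circular, because identifying $u(x,t)=t\,v(x/\sqrt t)$ as \emph{the} solution of \eqref{PMA} with initial datum $U_0$ (so that the uniqueness theorem applies) already requires $u(x,t)\to U_0(x)$ as $t\to 0$ along the full family. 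The missing observation --- which is the paper's, and is one line --- is that $u$ solves \eqref{PMA} and $G$ is a sum of arctangents of the eigenvalues of $D^2u$, hence
\[
\left|\frac{\p u}{\p t}(x,t)\right|=\left|G\lf(D^2u(x,t)\ri)\right|\leq \frac{n\pi}{2}
\]
uniformly in $x$ and $t>0$. Therefore $|u(x,t)-u(x,t')|\leq C|t-t'|$, so $u(x,\cdot)$ is uniformly Lipschitz in $t$ and $\lim_{t\to 0}u(x,t)$ exists for every $x$; the Arzel\`a--Ascoli step you describe (using $u(0,t)=tv(0)$, $Du(0,t)=\sqrt t\,Dv(0)$ and the Hessian bound $D^2u(x,t)=D^2v(x/\sqrt t)$) then upgrades this to $C^{1,\alpha}$ convergence on compact sets and shows $u_0$ inherits Condition A. With the full limit in hand, Condition B follows from exactly your substitution, now legitimately: $\lambda^{-2}u_0(\lambda x)=\lim_{t\to 0}(\sqrt t/\lambda)^2\,v(\lambda x/\sqrt t)=u_0(x)$ after setting $s=t/\lambda^2$. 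So the architecture of your proof is the right one, but the decisive estimate $|\p u/\p t|\leq C$ must be supplied to make the converse work.
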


\begin{proof}
If $u_0$ satisfies Condition A, then by Theorem \ref{entire}, there exists a smooth solution $u(x, t)$ to \eqref{PMA} for all $t>0$ with initial data $u_0$. It is clear that 
\[
u_\lambda(x, t):=\lambda^{-2}u(\lambda x, \lambda^2 t)
\]
is also a solution  to \eqref{PMA} with initial data \[u_\lambda(x, 0)=\lambda^{-2}u_0(\lambda x)=u_0(x)
\]
where we have used that $u_0$ satisfies Condition B. 
Since $u_\lambda(x, 0)=u_0$, the uniqueness result in \cite{chen-pang} implies
\[
u(x, t)=u_\lambda(x, t).
\]
 for any $\lambda>0$.  Therefore $u(x, t)$ satisfies \eqref{u-expander-1}, and hence $u(x, 1)$ solves \eqref{expander-elliptic}.  In other words, $u(x, 1)$ is a smooth self-expanding soliton.

Now suppose that $v$ is a smooth solution to \eqref{expander-elliptic} satisfying Condition A (in fact, it suffices to assume $D^2 v$ is bounded). 
Define $u(x, t)$ for $t>0$ by
\[
u(x, t)=tv\left(\frac{x}{\sqrt{t}}\right).
\]
It is clear that $u(x, t)$ satisfies the evolution equation in \eqref{PMA} since $v$ satisfies \eqref{expander-elliptic}. Now we claim that the limit of $u(x, t)$ exists when $t$ goes to zero. 
To see this, begin by noting $u(0, t)=tv(0)$ for $t>0$ and so $$\lim_{t\rightarrow 0}u(0, t)=0.$$  Moreover, it is clear that $$Du(0, t)=\sqrt{t}Dv(0)$$  and that for any $t>0$
\[
-(1-\delta) I_n\leq  D^2 u(x, t) \leq (1-\delta) I_n
\]
since 
\[
D^2u (x, t)=D^2 \left(tv\left(\frac{x}{\sqrt{t}}\right)\right)=D^2 v\left(\frac{x}{\sqrt{t}}\right).
\]
We may then conclude that for any  sequence $t_i\rightarrow 0$ there is a subsequence $t_{k_i}$ such that $u(x, t_{k_i})$ converges in $C^{1, \alpha}$ uniformly in compact subsets of $\R^n$ for any $0<\alpha<1$.  This limit is in fact independent of the sequence $t_i$.  Indeed, let $u_1$ and $u_2$ be two such limits along sequences $t_{i}$ and $t'_{i}$ respectively.  Since $u(x, t)$ is a solution to \eqref{PMA}, $\p u /\p t$ is also uniformly bounded for any $t>0$, $x\in \R^n$. Thus for any $x\in \R^n$ we may have 
$$|u(x, t_i) - u(x, t'_i)\left|\leq C|t_i - t'_i\ri|$$ for some $C$ independent of $i$.  Letting $i \to \infty$, we conclude that $u_1(x)=u_2(x)$.  So for different sequences $t_i\rightarrow 0$, the limit is unique. Let 
$$
u_0(x)=\lim_{t\rightarrow 0}u(x, t).
$$
Then  $D^2u_0\in L^\infty$ and we have 
\[
-(1-\delta) I_n\leq  D^2 u_0\leq (1-\delta) I_n.
\]
Further, 
\begin{eqnarray*}
\frac{1}{\lambda^2}u_0(\lambda x) &=& \frac{1}{\lambda^2}\lim_{t\to 0} tv\left(\frac{\lambda x}{\sqrt{t}}\right)\\
&=&\lim_{t\to 0}\left( \frac{\sqrt{t}}{\lambda}\right)^2 v\left(\frac{\lambda x}{\sqrt{t}}\right)\\
&=& u_0(x).
\end{eqnarray*}
Therefore $u_0$ satisfies Condition B. \end{proof}

\begin{rem}Lemma \ref{T-3} provides many examples of  entire graphical Lagrangian self-expanding solitons. For instance, we can choose $u_0$ such that
\[
u_0(x)=\left\{\begin{array}{ll}&\displaystyle{a_1 x_1^2+\sum_{i=2}^n a_i x_i^2,} \,\,\,\,\,\,\,\,\,\,\,\,\mbox{if $x_1\geq 0$}\medskip\\
&\displaystyle{-a_1x_1^2+\sum_{i=2}^n a_i x_i^2,} \,\,\,\,\,\,\,\mbox{if $x_1<0$}.
\end{array}
\right.
\]
If  $a_i\in (-1+\delta, 1-\delta)$ for any fixed $\delta\in(0,1)$, then $u_0$ satisfies  Condition A and Condition B and $u(x, 1)$  is a self-expanding soliton which satifies \eqref{expander-elliptic}. As $u_0$ is not smooth, $u(x,1)$ cannot be a 
quadratic polynomial. 
\end{rem}

Next, we show that smooth self-shrinking solitons are trivial if Condition A holds.

\begin{lem}If $v$ is a smooth solution of \eqref{shrinker-elliptic} such that $-(1-\delta)I_n\leq D^2v\leq (1-\delta)I_n$, then $v$ is a quadratic polynomial.
\end{lem}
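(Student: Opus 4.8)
The plan is to run the flow \eqref{PMA} forward from $v$, compare the self-shrinking solution that $v$ generates with the longtime solution supplied by Theorem \ref{entire}, and then extract the vanishing of $D^3v$ from the decay estimate (ii). Concretely, set
\[
w(x,s):=(1-s)\,v\left(\frac{x}{\sqrt{1-s}}\right),\qquad s\in[0,1),
\]
which is the potential counterpart of the homothetically shrinking family $M_s=\sqrt{1-s}\,M_0$. Exactly as in the computations of \S2, but with the soliton equation \eqref{shrinker-elliptic} replacing \eqref{expander-elliptic}, one checks that $w$ solves the evolution equation in \eqref{PMA} on $\R^n\times[0,1)$ with $w(\cdot,0)=v$; moreover
\[
D^2w(x,s)=D^2v\left(\frac{x}{\sqrt{1-s}}\right),
\]
so $-(1-\delta)I_n\le D^2w(x,s)\le (1-\delta)I_n$ for all $s\in[0,1)$, and in particular $v=w(\cdot,0)$ satisfies Condition A.

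Next I would apply Theorem \ref{entire} with initial data $v$: it gives a longtime smooth solution $\hat w(x,s)$ of \eqref{PMA} for all $s>0$, converging to $v$ in the Lipschitz norm as $s\to0$ and satisfying, by part (ii),
\[
\sup_{x\in\R^n}\big|D^3\hat w(x,s)\big|^2\le\frac{C(3,\delta)}{s},\qquad s>0.
\]
Since $w$ is also a solution of \eqref{PMA} with the same initial data $v$ — note $w(\cdot,s)\to v$ as $s\to0^+$ locally uniformly together with all derivatives, and $\partial_sw=G(D^2w)$ is bounded because $D^2w$ is — the uniqueness theorem of \cite{chen-pang} gives $w\equiv\hat w$ on $\R^n\times(0,1)$. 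Hence the above estimate holds for $w$: $\sup_{x\in\R^n}|D^3w(x,s)|^2\le C(3,\delta)/s$ for every $s\in(0,1)$.

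Finally, differentiating the self-similar ansatz three times gives $D^3w(x,s)=(1-s)^{-1/2}(D^3v)(x/\sqrt{1-s})$, so for each fixed $s\in(0,1)$, letting $x$ range over $\R^n$,
\[
\sup_{y\in\R^n}|D^3v(y)|^2=(1-s)\sup_{x\in\R^n}|D^3w(x,s)|^2\le\frac{C(3,\delta)(1-s)}{s}.
\]
Letting $s\to1^-$ forces $D^3v\equiv0$, so $v$ is a polynomial of degree at most two. I expect the one genuinely delicate point to be the invocation of \cite{chen-pang}: one must check that both $w$ and the solution produced by Theorem \ref{entire} belong to the class in which uniqueness is asserted — this is exactly where the uniform Hessian bound on $w$, the resulting boundedness of $\partial_sw$, and the local convergence $w(\cdot,s)\to v$ as $s\to0^+$ are needed. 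Everything else is routine bookkeeping with the scaling.
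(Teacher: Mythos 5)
Your proposal is correct and follows essentially the same route as the paper: run the self-similar ansatz $(1-s)v(x/\sqrt{1-s})$ forward as a solution of \eqref{PMA} with initial data $v$, identify it with the solution from Theorem \ref{entire} via the uniqueness result of \cite{chen-pang}, and use the scaling identity $D^3w(x,s)=(1-s)^{-1/2}D^3v(x/\sqrt{1-s})$ together with the derivative estimate (ii) to force $D^3v\equiv 0$ as $s\to 1^-$. The only cosmetic difference is that the paper invokes the bound $|D^3 u|\le C$ for $t\ge 1/2$ rather than the full decay rate $C/\sqrt{s}$, which changes nothing.
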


\begin{proof}If $v$ is a smooth solution to \eqref{shrinker-elliptic}, then 
$$
u(x, t)=(1-t)v\left(\frac{x}{\sqrt{1-t}}\right)
$$ 
is  a solution to \eqref{PMA} for $t\in (0, 1)$ and $u(x, 0)=v(x)$, hence Theorem \ref{entire} applies to $u(x,t)$ since this solution is unique, and in particular $\left|D^3u(x, t)\ri|\leq C$ for some constant $C$ when $t\geq 1/2$ for any $x$.  But one checks directly that 
$$
D^3u(x, t)=\frac{1}{\sqrt{1-t}}D^3v\left(\frac{x}{\sqrt{1-t}}\right).
$$ 
This implies $$\left|D^3v(x)\ri|=\left|D^3v\left(\dfrac{x\sqrt{1-t}}{\sqrt{1-t}}\ri)\ri|\leq C\sqrt{1-t}$$ for any $x$.  It follows that $D^3v(x)=0$, thus $v$ is quadratic.
\end{proof}

Now we prove Theorem \ref{T-2}.
\begin{proof}Let $u(x, t)$ be the solution to \eqref{PMA} with initial data $u_0$ satisfying Condition A. It is clear that for any $\lambda$, 
$$
u_\lambda(x, t)=\lambda^{-2}u(\lambda x, \lambda^2 t)
$$ 
is a solution of \eqref{PMA} with initial data $u_\lambda(x, 0)=\lambda^{-2}u_0(\lambda x)$ satisfying Condition A.   By the uniqueness result in \cite{chen-pang}, we may then apply the estimates in Theorem \ref{entire} to $u_\lambda(x, t)$.  For any sequence $\lambda_i\rightarrow \infty$, consider the solutions $u_{\lambda_i}(x, t)$. 
For $t>0$, it is clear that $$D^2u_{\lambda_i}(x, t)=D^2 u(\lambda_i x, \lambda_i^2 t)$$ and so by Theorem \ref{entire},
\[
 -(1-\delta)I_n\leq D^2 u_{\lambda_i}(x, t)\leq (1-\delta)I_n
\]
 for all $x$ and $t\geq0$, and for $t>0$ and $l\geq 3$ 
\[
\left|D^l u_{\lambda_i}(x, t)\right|\leq C(l, \delta)\sqrt{t}^{2-l}.
\]  Thus by \eqref{PMA} we can also get the following estimates that, for any $m\geq 1, l\geq 0 $, there is a constant $C(m, l, \delta)$ such that
\[
\left|\frac{\p^m}{\p t^m} D^l u_{\lambda_i}\right|\leq C(m, l, \delta) \sqrt{t}^{2-l-2m}.
\]
In particular, there are constants $C$ and $ C(\delta)$ such that 
 \[
\left|\frac{\p u_{\lambda_i}}{\p t}\right|\leq C, \,\,\,\,\left|\frac{\p Du_{\lambda_i}}{\p t}\right|\leq \frac{C(\delta)}{\sqrt{t}}
\]
for all $t>0$.  We observe that $$u_{\lambda_i}(0, 0)=\lambda_i^{-2}u_0(0)$$ and  $$Du_{\lambda_i}(0, 0)=\lambda_i^{-1} Du_0(0)$$ are both bounded, thus $u_{\lambda_i}(0, t)$ and $Du_{\lambda_i}(0, t)$ are uniformly bounded in $i$ for any fixed $t$. 
By Arzel\`a-Ascoli theorem, there exists a subsequence $\lambda_{k_i}$ such that $u_{\lambda_{k_i}}(x, t)$ converges smoothly and uniformly in compact subsets of $\R^n\times (0, \infty)$ to  a solution $U(x, t)$ of \eqref{PMA}. 
Moreover, $U(x, t)$ satisfies the estimates in Theorem \ref{entire}.  Since $\p U/\p t$ is uniformly bounded for any $t>0$, $U(x, t)$ converges to some function $U(x, 0)$  when $t\rightarrow 0$. In particular  we have
\[
U(x, 0)=\lim_{\lambda_{k_i}\rightarrow \infty} \lambda_{k_i}^{-2}u_0(\lambda_{k_i}x).
\]
It is clear that $U(x, 0)$ satisfies Condition A.
Similarly as in the proof of Lemma 2.1, the hypothesis  (\ref{hypothesis}) implies that  $U(x, 0)$ satisfies Condition B.  Thus by Theorem \ref{soliton} we know that $U(x, 1)$ is a self-expanding soliton, and $U(x, t)$ satisfies \eqref{u-expander-1}.  

Moreover, notice that $U(x,0)=U_0(x)$ by (\ref{hypothesis}) and $U_0(x)$ does not depend on the sequence $\lambda_i$, it then follows from the uniqueness result in \cite{chen-pang} that the solution $U(x, t)$ is also independent of the sequence $\lambda_i$ and we may write 
\[
u_\lambda(x, t)\rightarrow U(x, t).
\]
as $\lambda\rightarrow \infty$.  In particular, letting $\lambda=\sqrt{t}$ we have $t^{-1}u(\sqrt{t}x, t)$ converges to $U(x, 1)$ smoothly and uniformly in compact subsets of $\R^n$ when $t\rightarrow \infty$.
\end{proof}

\begin{rem}
 The family of Lagrangian graphs $(x + at, Du_0(x)+bt)$ is a translating solution to \eqref{LMCF} precisely when $u$ satisfies 
\begin{equation}\label{E-soliton-1}
\sum_i\arctan \lambda_i+\sum_i a_i\frac{\partial u_0}{\partial x_i}-\sum_ib_ix_i=c,
\end{equation}
for some constant $c$.  In \cite{CCH} the authors proved that if $u$ is a smooth solution to \eqref{E-soliton-1} which satisfies condition A, then $u$ is a quadratic function.  We observe here that this result can also be obtained  from the uniqueness result in \cite{chen-pang} together with Theorem \ref{entire}.  Namely, if $u_0(x)$ solves \eqref{E-soliton-1} then $v(x, t)= u_0(x-a t)+b\cdot x \,t +c t$ solves \eqref{PMA} with initial condition $v(x,0)=u_0(x)$.  On the other hand, Theorem  \ref{entire} guarantees a longtime solution $u(x, t)$ to \eqref{PMA} with initial condition $u_0$ for which $\sup_{x\in\R^n}|D^3u(x, t)|\to 0$ as $t\to \infty$.  Then as  $u(x, t)=v(x, t)$ by the uniqueness result in  \cite{chen-pang}, $\sup_{x\in \R^n}|D^3u_0(x-at)|=\sup_{x\in \R^n}|D^3v(x, t)|\to 0$ as $t\to \infty$ and we conclude that $\sup_{x\in\R^n}|D^3u_0(x)|=0$, in other words $u_0$ must be quadratic. \end{rem}

\bibliographystyle{amsplain}

\begin{thebibliography}{10}
\bibitem{BBL} G. Barles, S. Biton, O. Ley, {\sl Uniqueness for parabolic equations without growth condition and applications to the mean curvature flow in ${\bf R}^2$}, J. Differential Equations {\bf 187} (1987), 456-472. 
\bibitem{CCH} A. Chau, J. Chen, W. He, {\sl Lagrangian Mean Curvature flow for entire Lipschitz graphs}, arXiv:0902.3300.
\bibitem{chen-pang} J. Chen, C. Pang, {\sl Uniqueness of viscosity solutions of a geometric fully nonlinear parabolic equation}, arXiv:0905.3868.
\bibitem{EH} K. Ecker, G. Huisken, {\sl Mean curvature evolution of entire graphs}, Ann. of Math. (2) {\bf 130} (1989), no.3, 453-471. 

\bibitem{AH} H. Anciaux,  {\sl
Construction of Lagrangian self-similar solutions to the mean curvature flow in $\Bbb C\sp n$}, Geom. Dedicata 120 (2006), 37--48. 
\bibitem{JLT} D. Joyce, Y.I. Lee, M.P. Tsui, {\sl Self-similar solutions and translating solitons for Lagrangian mean curvature flow}. arXiv:0801.372. 


\bibitem{smoczyk}K. Smoczyk, {\sl Longtime existence for the Lagrangian mean curvature flow}, Calc. Var. (2004), {\bf 20}, 25-46. 
\end{thebibliography}

\end{document}